\documentclass[preprint,12pt]{article}

\usepackage{amssymb, amsmath, bbm, mathrsfs, color,url}
\usepackage{fullpage, amsthm}


\theoremstyle{plain}
\newtheorem{thm}{Theorem}[section]
\newtheorem{cor}[thm]{Corollary}
\newtheorem{prop}[thm]{Proposition}

\theoremstyle{definition}
\newtheorem{defn}[thm]{Definition}
\newtheorem{lem}[thm]{Lemma}
\newtheorem{ex}[thm]{Example}

\newcommand{\bR}{\ensuremath{\mathbb{R}}}
\newcommand{\bC}{\ensuremath{\mathbb{C}}}

\newcommand{\bF}{\ensuremath{\mathbb{F}}}

\newcommand{\bN}{\ensuremath{\mathbb{N}}}
\newcommand{\cW}{\ensuremath{\mathcal{W}}}
\newcommand{\cV}{\ensuremath{\mathcal{V}}}

\newcommand{\cZ}{\ensuremath{\mathcal{Z}}}

\newcommand{\beq}{\begin{equation}}
\newcommand{\eeq}{\end{equation}}

\newcommand{\ip}[2]{\ensuremath{\langle {#1},{#2} \rangle}}

\newcommand{\lspan}{\operatorname{span}}

\newcommand{\col}{\operatorname{col}}
\newcommand{\tr}{\operatorname{tr}}


\begin{document}

\title{Constructing Subspace Packings from Other Packings}
\author{Emily J.~King\\Colorado State University\\\url{emily.king@colostate.edu}}

\maketitle

\begin{abstract} The desirable properties when constructing collections of subspaces often include the algebraic constraint that the projections onto the subspaces yield a resolution of the identity like the projections onto lines spanned by vectors of an orthonormal basis (the so-called tightness condition) and the geometric constraint that the subspaces form an optimal packing of the Grassmannian, again like the one-dimensional subspaces spanned by vectors in an orthonormal basis. In this article a generalization of related constructions which use known packings to build new configurations and which appear in numerous forms in the literature is given, as well as the characterization of a long list of desirable algebraic and geometric properties which the construction preserves.  Another construction based on subspace complementation is similarly analyzed. While many papers on subspace packings focus only on so-called equiisoclinic or equichordal arrangements, attention is also given to other configurations like those which saturate the orthoplex bound and thus are optimal but lie outside of the parameter regime where equiisoclinic and equichordal packings can occur.
\textbf{Keywords:} fusion frame, Grassmannian packing, simplex bound, orthoplex bound, equichordal, strongly simplicial, equiisoclinic \textbf{MSC 2010:} 42C15, 14M15
\end{abstract}

\section{Introduction}
\subsection{Motivation}

The goal is to find optimal configurations of subspaces which are of interest mathematically but also in applications such as coding theory (see, e.g., \cite{Cre08,PWTH16,XZG05,KaPa03,GrassFus}), quantum information theory (see, e.g., \cite{FHS17,AFZ15,ShSl98,GoRo09}), and more.  The usefulness of such configurations often comes from whether the projections onto the subspaces (approximately) yield a resolution of the identity and whether the angles between the subspaces are as large as possible.  Configurations which satisfy the former condition are called (tight) fusion frames and the latter condition are called Grassmannian packings.

In general, Grassmannian fusion frames, which are fusion frames which correspond to optimal packings of Grassmannian spaces with respect to the chordal distance, are optimally robust against noise and erasures. Under certain models of noise and erasures, a type of Grassmannian fusion frame called equichordal is shown to be optimal \cite{GrassFus}. Under other models, the subclass of Grassmannian fusion frames which are called equiisoclinic have been proven to be the best \cite{Bod07}, while for certain coding theory regimes such packings are not optimal \cite{PWTH16}.  Equiisoclinic tight fusion frames are also proven to be deterministically optimal in block sparse recovery \cite{SAH14,EKB10}. 

We complete this section by introducing Grassmannian fusion frames and basic notation (Section~\ref{sec:frame}). In Section~\ref{sec:recycle}, we give generalizations, classifications, and examples of constructions from the literature of various Grassmannian packings that can be constructed from other such packings.  In particular, a construction of Grassmannian packings and fusion frames from ones known to exist which generalizes different constructions based on Kronecker products in \cite{LemSei73,BCPST,Cre08,SAH14,CFMWZ,CTX15,EKB10,MKH20} is presented in Theorem~\ref{thm:gentens} and Corollary~\ref{cor:gentens}, coupled with proofs of which desirable algebraic and geometric properties are inherited from the original packings.  Then subspace complementation, a construction method found in, e.g.\ \cite{BCPST}, is characterized including a focus on optimal Grassmannian packings which are not equichordal (Proposition~\ref{prop:orth}).

Throughout the paper, $\bF$ will always either denote $\bR$ or $\bC$.  Further for $m, n \in \bN$, we define $[n] := \{1, 2, \hdots, n\}$, $M(\bF,m,n)$ to be the set of $m\times n$ matrices with entries in $\bF$, and $I_n$ to be the $n\times n$ identity matrix. Finally, for $A \in M(\bF,m,n)$, we write $\col(A)$ for the column span of $A$.  
\subsection{Fusion Frames and Grassmannian Packings}\label{sec:frame}
Our objects of interest are collections of subspaces, which may be viewed as points in a Grassmannian.
\begin{defn}\label{defn:chor}
For $1 \leq m \leq k-1$, set $Gr(\bF,k,m)$ to be the collection of $m$ dimensional subspaces of $\mathbb{F}^k$.  $Gr(\bF,k,m)$ is called a \emph{Grassmannian}.  $Gr(\bF,k,m)$ is endowed with a metric space structure induced by the \emph{chordal distance} (see, e.g., \cite{GrassPack})
\begin{equation}\label{eqn:chord}
d_c(\mathcal{W}_i,\mathcal{W}_j) = [m - \tr(P_i P_j)]^{1/2},
\end{equation}
for $\mathcal{W}_i, \mathcal{W}_j \in Gr(\bF,k,m)$, where $P_i$ is the orthogonal projection onto $\mathcal{W}_i$.  
\end{defn}
We do not consider the trivial cases $m = 0$ and $m=k$ as they are not interesting. We also note that there are many other metrics that one can endow the Grassmannian with, like the spectral distance, the geodesic distance, and the Fubini-Study distance (see, e.g., \cite{GrassPack,DHST08}). 
Let us consider a set of vectors $E=\{e_i\}_{i=1}^k$ for $\bF^k$ and define for each $i \in [k]$ $\cW_i = \lspan \{e_i\} \in Gr(\bF,k,1)$.  Then $E$ is an orthonormal basis precisely when  $P_i = e_i e_i^\ast$ is the orthogonal projection onto $\cW_i$ for each $i$, $I_k = \sum_{i=1}^k P_i$, and
\beq\label{eqn:grasspackonb}
\min_{i,j \in [k], i\neq j} d^2_c(\mathcal{W}_i,\mathcal{W}_j) = \max_{\{\mathcal{V}_i\}_{i=1}^k \subset Gr(\bF,k,1)} \left( \min_{i,j \in [k], i\neq j} d^2_c(\mathcal{V}_i,\mathcal{V}_j) \right).
\eeq
We now generalize these traits to systems which may be overcomplete and consist of subspaces of dimension greater than $1$. Collections of possibly redundant systems of subspaces which yield algebraic reconstruction properties akin to orthonormal bases have appeared in the literature for many years with many names, including \emph{stable space splittings of Hilbert spaces}~\cite{Osw94,Osw97}, \emph{systems of bounded quasi-projectors}~\cite{Forn03,Forn04}, \emph{(weighted projective) resolutions of the identity}~\cite{Forn03,Bod07}, \emph{$g$-frames}~\cite{Sun06}, and \emph{frames of subspaces}~\cite{CaK04}. We will use the name \emph{fusion frames} and the related terminology, which were introduced in~\cite{CKL08}. See \cite[Chapter 13]{CaKBook} for a general overview of fusion frames.
\begin{defn}
A finite collection of subspaces $\lbrace \mathcal{W}_i \rbrace_{i=1}^n \subset Gr(\bF,k,m)$ is a \emph{tight fusion frame of $m$-dimensional subspaces with unit weights} for $\mathbb{F}^k$ if there exists an $A > 0$ (called the \emph{fusion frame bound}) satisfying
\begin{equation}\label{eqn:tight4}
x = \frac{1}{A} \sum_{i=1}^n P_i x, \quad \textrm{for all $x \in \bF^k$},
\end{equation}
where $P_i$ is the orthogonal projection onto $\mathcal{W}_i$. The map $x \mapsto \sum_{i=1}^n P_i x$ is called the \emph{fusion frame operator}.
\end{defn}
One may loosen this definition by allowing non-equidimensional subspaces, non-equal weights, and the fusion frame operator to only be an approximation of the identity; however, we will not be concerned with these cases in this paper. To avoid being verbose, we shall refer to tight fusion frames of $m$-dimensional subspaces  with unit weights as \emph{tight fusion frames}. Given $\{\cW_i\}_{i=1}^n \subset Gr(\bF,k,m)$, we fix for each $i \in [n]$ an orthonormal basis $\{e_j^i\}_{j=1}^m$ for the subspace $\cW_i$ and denote by $L_i$ the matrix $(e_1^i e_2^i \hdots e_m^i) \in M(\bF,k,m)$. We further define
\[
L  = \left( \begin{array}{cccc} L_1 & L_2 & \hdots & L_n\end{array}\right).
\]
Then it is clear that~\eqref{eqn:tight4} holds, i.e., that $\lbrace \mathcal{W}_i \rbrace_{i=1}^n$ is a tight fusion frame of $d$-dimensional subspaces with unit weights for $\bF^k$ with fusion frame bound $A$ precisely with the rows of $L$ are orthogonal with norm $\sqrt{A}$.  
 
We will consider three definitions of ``equal'' geometric spread between subspaces.  See \cite{LemSei73,BEt07,BCPST,BjGo73,Cre08}, in particular [Theorem 2.3] of \cite{LemSei73}, [Theorem 1] of \cite{BjGo73}, and [p. 4] of \cite{Cre08}.
\begin{defn}\label{defn:diffeq} 
Let $\{ \cW_i\}_{i=1}^n \subset Gr(\bF,k,m)$ (not necessarily a fusion frame) with corresponding orthonormal bases as the columns of $\{L_i\}_{i=1}^n$. Then we say
\begin{itemize}
\item $\{ \cW_i\}_{i=1}^n$ is \emph{equichordal} when for all $i,j \in [n]$ with $i \neq j$, $\tr(L_i^\ast L_j L_j^\ast L_i)$ is constant;
\item $\{ \cW_i\}_{i=1}^n$ is \emph{strongly simplicial} when for all $i,j \in [n]$ with $i \neq j$, $L_i^\ast L_j L_j^\ast L_i$ has the same set of eigenvalues; and
\item $\{ \cW_i\}_{i=1}^n$ is \emph{equiisoclinic} when there exists an $\alpha \geq 0$ such that for all $i,j \in [n]$ with $i \neq j$, $L_i^\ast L_j L_j^\ast L_i =\alpha I_m$.
\end{itemize}
If $\rho_1, \rho_2, \hdots, \rho_m$ are the eigenvalues of $L_i^\ast L_j L_j^\ast L_i$, then for all $\ell \in [m]$ the $\theta_\ell \in [0, \pi/2]$ which satisfy $\cos^2(\theta_\ell) = \rho_\ell$ are called the \emph{principal angles} between $\cW_i$ and $\cW_j$. 
\end{defn}
We note that given a set of parameters, the only variable in the definition of the chordal distance~\eqref{eqn:chord} is $\tr(L_i^\ast L_j L_j^\ast L_i)=\tr(P_i P_j)$; thus, being equichordal means that $d^2_c(\mathcal{W}_i,\mathcal{W}_j)$ is constant for $i \neq j$, as one would hope given the name. 

If we fix $k$, $m$, $n$ and $\bF$, then the \emph{Grassmannian packing problem} concerns finding $n$ elements in $Gr(\bF,k,m)$ so that the minimal distance between any two subspaces is as large as possible, just like in~\eqref{eqn:grasspackonb}.  An algorithm to approximate solutions is in~\cite{DHST08}, while~\cite{Sloane} has a (somewhat dated) list of best known packings when $\bF = \bR$. For fixed parameters $k$, $m$, $n$ and $\bF$, the maximizers  $\{ \cW_i\}_{i=1}^n$ of $\min_{i,j \in [n], i\neq j} d^2_c(\mathcal{W}_i,\mathcal{W}_j)$ are called \emph{Grassmannian fusion frames} (which may or may not be tight fusion frames).

Definition~\ref{def:Gerz} and Theorem~\ref{thm:simplex} summarize results concerning the Grassmannian packing problem found in~\cite{Ran55,GrassPack,GrassFus,LS1973,BH15,Hen05,FJMW17} and many more sources.
\begin{defn}\label{def:Gerz}
Define
\[
 \cZ(\bF,k) = \left\{\begin{array}{lcr}k^2 & ; & \bF = \bC \\  \frac{k(k+1)}{2} &; & \bF = \bR \end{array} \right.
\]
\end{defn}
This is known as \emph{Gerzon's bound} and comes from the dimension of the smallest vector subspace of $M(\bF,k,k)$ which contains the symmetric / self-adjoint matrices.

\begin{thm}\label{thm:simplex}
Let $\{ \cW_i\}_{i=1}^n \subset Gr(\bF,k,m)$, then
\beq\label{eqn:simplex}
\min_{i,j \in [n], i\neq j}  d^2_c(\mathcal{W}_i,\mathcal{W}_j)  \leq \frac{m(k-m)n}{k(n-1)}.
\eeq
The bound in~\eqref{eqn:simplex} is saturated if and only if $\{ \cW_i\}_{i=1}^n$ is an \emph{equichordal tight fusion frame}. Further, the bound~\eqref{eqn:simplex} can only be saturated if $n \leq  \cZ(\bF,k)$.  When $\cZ(\bF,k) < n \leq 2(\cZ(\bF,k)-1)$, 
\beq\label{eqn:orthoplex}
\min_{i,j \in [n], i\neq j}  d^2_c(\mathcal{W}_i,\mathcal{W}_j)  \leq \frac{m(k-m)}{k}.
\eeq
\end{thm}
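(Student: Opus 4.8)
The plan is to recast the packing problem as a spherical packing inside the real inner product space $\mathcal{H}$ of self-adjoint $k\times k$ matrices over $\bF$, equipped with $\ip{A}{B}=\tr(AB)$, and then to apply two classical Rankin-type arguments. Assume $1\le m<k$, the case $m=k$ being trivial. First I would send each $\cW_i$ to its orthogonal projection $P_i=L_iL_i^\ast$, recalling that $\tr(P_iP_j)=\tr(L_i^\ast L_jL_j^\ast L_i)$, $\tr P_i=m$, $P_i^2=P_i$, and $\dim_{\bR}\mathcal{H}=\cZ(\bF,k)$. Translating to the common centroid, put $\hat P_i:=P_i-\tfrac{m}{k}I_k$. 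A one-line computation then gives $\norm{\hat P_i}^2=\tfrac{m(k-m)}{k}=:r^2$ for all $i$ and $\ip{\hat P_i}{\hat P_j}=\tr(P_iP_j)-\tfrac{m^2}{k}$, so the $\hat P_i$ all lie on the sphere of radius $r$ inside the traceless hyperplane of $\mathcal{H}$, which has real dimension $D:=\cZ(\bF,k)-1$, and
\[
d_c^2(\cW_i,\cW_j)\;=\;m-\tr(P_iP_j)\;=\;r^2-\ip{\hat P_i}{\hat P_j}.
\]

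Next, for the simplex bound write $\delta:=\min_{i\neq j}d_c^2(\cW_i,\cW_j)$, so that $\ip{\hat P_i}{\hat P_j}\le r^2-\delta$ for $i\neq j$. Expanding
\[
0\;\le\;\norm{\textstyle\sum_{i=1}^n\hat P_i}^2\;=\;nr^2+\sum_{i\neq j}\ip{\hat P_i}{\hat P_j}\;\le\;nr^2+n(n-1)(r^2-\delta)
\]
and solving for $\delta$ yields~\eqref{eqn:simplex}. Equality throughout forces $\sum_{i=1}^n\hat P_i=0$ and $\ip{\hat P_i}{\hat P_j}=r^2-\delta$ for every $i\neq j$; the former says $\sum_{i=1}^nP_i=\tfrac{nm}{k}I_k$, i.e.\ $\{\cW_i\}$ is a tight fusion frame, and the latter says $d_c^2(\cW_i,\cW_j)$ is constant, i.e.\ $\{\cW_i\}$ is equichordal. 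Conversely, if $\{\cW_i\}$ is an equichordal tight fusion frame, then $\sum_i\hat P_i=0$ and all the $d_c^2(\cW_i,\cW_j)$ agree, and reading the same chain of identities backwards shows the bound is attained.

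For the constraint $n\le\cZ(\bF,k)$ under saturation, note that the Gram matrix $G=(\ip{\hat P_i}{\hat P_j})_{i,j}$ then equals $\delta I_n+(r^2-\delta)J_n$ with $J_n$ the $n\times n$ all-ones matrix; since $\sum_i\hat P_i=0$, the all-ones vector lies in $\ker G$, so (using $r^2>0$) the eigenvalue $\delta>0$ occurs with multiplicity $n-1$ and $\operatorname{rank}G=n-1$. As $\operatorname{rank}G$ equals $\dim\lspan\{\hat P_i\}$ inside the $D$-dimensional traceless hyperplane, $n-1\le D=\cZ(\bF,k)-1$. Finally, for the orthoplex bound assume $\cZ(\bF,k)<n$. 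If $\min_{i\neq j}d_c^2(\cW_i,\cW_j)>r^2=\tfrac{m(k-m)}{k}$, then $\ip{\hat P_i}{\hat P_j}<0$ for all $i\neq j$, so $\hat P_1,\dots,\hat P_n$ are $n$ vectors in the $D$-dimensional real space of traceless self-adjoint matrices with pairwise negative inner products. But $\bR^D$ admits at most $D+1$ such vectors (induct on $D$: orthogonally project onto $\hat P_n^\perp$; the residuals are nonzero and again pairwise negative), forcing $n\le D+1=\cZ(\bF,k)$, a contradiction; hence~\eqref{eqn:orthoplex}.

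The matrix identities in the first step are routine. The steps that require care are the precise bookkeeping of the two equality conditions behind~\eqref{eqn:simplex} (tightness versus equichordality) and, above all, the invocation of the elementary but genuinely separate lemma bounding the number of vectors in $\bR^D$ with pairwise negative inner products; this last ingredient is what I expect to be the main obstacle to writing the orthoplex part cleanly.
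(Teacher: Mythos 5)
The paper itself gives no proof of Theorem~\ref{thm:simplex}; it is presented as a summary of known results with citations to \cite{Ran55,GrassPack,GrassFus,LS1973,BH15,Hen05,FJMW17}. Your argument is correct and is precisely the standard Rankin-type proof found in those sources: the traceless embedding $P_i\mapsto P_i-\tfrac{m}{k}I_k$ onto a sphere of radius $r$ in the $(\cZ(\bF,k)-1)$-dimensional hyperplane, the expansion of $\norm{\sum_i\hat P_i}^2\ge 0$ for the simplex bound with its two equality conditions cleanly separated into tightness and equichordality, the rank-$(n-1)$ Gram matrix for Gerzon's bound, and the ``at most $D+1$ vectors with pairwise negative inner products'' lemma for the orthoplex bound. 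All the computations check out (including $\norm{\hat P_i}^2=\tfrac{m(k-m)}{k}$ and the nonvanishing of the projected residuals in the induction, which you correctly note requires the strict negativity of the inner products), and your argument in fact establishes~\eqref{eqn:orthoplex} for all $n>\cZ(\bF,k)$, of which the stated range is a subset.
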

Thus if a tight fusion frame is equichordal, strongly simplicial, or equiisoclinic it is a Grassmannian fusion frame since all of those configurations are equichordal.  The bound in~\eqref{eqn:simplex} is known as the \emph{simplex bound}, and the bound in~\eqref{eqn:orthoplex} is the \emph{orthoplex bound}. If the orthoplex bound is saturated and $\cZ(\bF,k) < n \leq 2(\cZ(\bF,k)-1)$, then we call $\{\cW_i\}_{i=1}^n$ an \emph{orthoplectic Grassmannian packing}. Such a packing need not be a tight fusion frame. We further note that when $n \leq  \cZ(\bF,k)$, it is possible for $\{\cW_i\}_{i=1}^n$ to saturate the orthoplex bound but not be a Grassmannian fusion frame.

There are some special terms for the above-defined concepts when $m=1$ (see, e.g., \cite{JKM19,CaKBook,Waldron18}).  Namely, tight fusion frames of $1$-dimensional subspaces with unit weights are called \emph{finite unit norm tight frames}.  Also, the definitions of equichordal, strongly simplicial, and equiisoclinic coincide and are jointly known as \emph{equiangular}.

Some constructions and characterizations of equiisoclinic packings are in \cite{BEt06,BEt07,BEt14,Hog77,LemSei73,CHRSS99,ShSl98}, of strongly simplicial packings are  in \cite{Cre08}, of equichordal packings are in \cite{Cre08,CHRSS99,GrassFus,BoPa15,GrassFusMe,King19a}, and of orthoplectic Grassmannian packings are in \cite{ShSl98,BH15,GoRo09}.


\section{Constructing New Grassmannian Packings from Old}\label{sec:recycle}

A common method for constructing new optimal Grassmannian packings out of already known ones is to use a Kronecker product or Kronecker-like product.  We make note of the following standard definition and properties. 
\begin{defn}\label{def:kron}
For $A \in M(\bF,r, s)$ and $B \in M(\bF,p,q)$, with the $i$th row and $j$th column of $A$ denoted by $a_{i,j}$, we define the \emph{Kronecker product} $A \otimes B \in M(\bF,rp,sq)$ as
\[
A \otimes B = \left( \begin{array}{ccc} a_{1,1} B & \hdots & a_{1,s} B \\ \vdots & \ddots & \vdots \\ a_{r,1} B & \hdots & a_{r,s} B\end{array}\right).
\]
\end{defn}
A certainly incomplete summary of such constructions making use of the Kronecker product in the literature follows. We will often use the more general term \emph{tensor} instead of Kronecker product to circumvent awkward phrasing.  In \cite{LemSei73}, the authors tensor real equiangular lines (not necessarily a frame) with orthogonal matrices to obtain equiisoclinic packings (not necessarily a fusion frame).  They achieve this indirectly by tensoring the Gram matrix of the equiangular lines with an identity matrix ([proof of Theorem 3.7], [Remark 4.2]).  The same construction was repeated in \cite[Theorem 4]{SAH14} but specifically for the case of equiangular tight frames (although the authors falsely assume that all Grassmannian frames are equiangular tight frames). Similarly, in~\cite{MKH20}, orthonormal basis vectors for equichordal tight fusion frames are tensored with an identity to generate equichordal tight fusion frames. The constructions in~\cite{CTX15} involve tensoring vectors in an equiangular tight frame [Theorem 3] or mutually unbiased bases [Theorem 6] (see also~\cite{EKB10}) with a fixed unitary and then considering the spectral distance -- which encourages equiisoclinic packings -- between the subspaces. In~\cite{Hog77}, the concept of \emph{isoclinic covariant functors} as a method of creating new sets of equiisoclinic subspaces from pre-existing ones is introduced.  Tensoring vectors with an identity matrix would be one example of such a functor. Instead of starting with an equiangular tight frame, the author of \cite{Cre08} tensors the projections of an equichordal Grassmannian fusion frame with an identity matrix to obtain another equichordal Grassmannian fusion frame in [Proposition 12].  Finally, in \cite[Corollary 5]{BCPST}, each column vector in a truncated orthonormal basis is tensored with a unitary matrix representing the different subspaces of a particular type of tight fusion frame to obtain a new tight fusion frame, called the Naimark complement, and in [Theorem 7], a specific application of this starting with a collection of orthonormal bases for the entire space is used to obtain a equiisoclinic Grassmannian fusion frame. Furthermore, \cite{CFMWZ} essentially has the construction from Corollary~\ref{cor:gentens}  but in the specific case of building new tight fusion frames from old ones, without a focus on the geometric properties. We now present a construction which subsumes the ones listed above. The theorem also shows that the construction preserves various desired properties separately.
\begin{thm}\label{thm:gentens}
Let $\{\cW_i\}_{i=1}^n$ be a collection of $m$-dimensional subspaces in $\bF^k$.  For each $i \in [n]$, fix an orthonormal basis $\{e_j^i\}_{j=1}^m$ of $\cW_i$.  Further let $\{U_i\}_{i=1}^n$ be a collection of unitaries in $M(\bF,r,r)$.  For each $i \in [n]$ and $j \in [m]$ define 
\begin{align*}
\overline{e}_j^i &= e_j^i \otimes U_i \in M(\bF,rk,r),   \\
\overline{L}_i &= (\overline{e}_1^i \overline{e}_2^i \hdots \overline{e}_m^i) \in M(\bF,rk,rm), \textrm{and}\\
\overline{\cW}_i &= \col \overline{L}_i \in Gr(\bF,rk,rm).
\end{align*}
Then the following statements hold.
\begin{enumerate}
\item For each $i \in [n]$, the columns of $\overline{L}_i$ are a set of $rm$ orthonormal vectors in $\bF^{rk}$;
\item $\{ \overline{\cW}_i\}_{i=1}^n$ is equichordal (not necessarily a fusion frame) if and only if $\{\cW_i\}_{i=1}^n$ is;
\item $\{ \overline{\cW}_i\}_{i=1}^n$ is strongly simplicial (not necessarily a fusion frame) if and only if $\{\cW_i\}_{i=1}^n$ is; 
\item $\{ \overline{\cW}_i\}_{i=1}^n$ is equiisoclinic (not necessarily a fusion frame) if and only if $\{\cW_i\}_{i=1}^n$ is; 
\item $\{ \overline{\cW}_i\}_{i=1}^n$ saturates the orthoplex bound if and only if $\{\cW_i\}_{i=1}^n$ does, but they cannot both be orthoplectic Grassmannian packings if $r\geq 2$;
and
\item $\{ \overline{\cW}_i\}_{i=1}^n$ is a tight fusion frame if and only if $\{\cW_i\}_{i=1}^n$ is. In this case, they have the same fusion frame bound.
\end{enumerate}
\end{thm}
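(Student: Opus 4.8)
The entire theorem is driven by one reformulation. Writing $L_i=(e_1^i\cdots e_m^i)\in M(\bF,k,m)$ as in Section~\ref{sec:frame}, the block definition of the Kronecker product (Definition~\ref{def:kron}) shows that the concatenation $\overline{L}_i=(\overline{e}_1^i\cdots\overline{e}_m^i)$ is nothing but $\overline{L}_i=L_i\otimes U_i$. From here everything is an application of Proposition~\ref{prop:tensprop} together with $U_i^\ast U_i=U_iU_i^\ast=I_r$. First, $\overline{L}_i^\ast\overline{L}_i=(L_i^\ast L_i)\otimes(U_i^\ast U_i)=I_m\otimes I_r=I_{rm}$, which is statement (1); consequently $\overline{P}_i:=\overline{L}_i\overline{L}_i^\ast$ is the orthogonal projection onto $\overline{\cW}_i$, and the same computation yields the key formula $\overline{P}_i=(L_iL_i^\ast)\otimes(U_iU_i^\ast)=P_i\otimes I_r$. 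Second, multiplying out adjoints, $\overline{L}_i^\ast\overline{L}_j\overline{L}_j^\ast\overline{L}_i=(L_i^\ast L_jL_j^\ast L_i)\otimes(U_i^\ast U_jU_j^\ast U_i)=G_{ij}\otimes I_r$, where $G_{ij}:=L_i^\ast L_jL_j^\ast L_i$; that is, the cross-Gram matrix of the tensored family is that of the original family tensored with $I_r$.

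Statements (2)--(4) are read directly off this last formula. Since $\tr(G_{ij}\otimes I_r)=r\,\tr(G_{ij})$, the traces are simultaneously constant over $i\neq j$, giving (2). Since $G_{ij}$ is positive semidefinite its eigenvalues coincide with its singular values, so by the singular-value clause of Proposition~\ref{prop:tensprop} the multiset of eigenvalues of $G_{ij}\otimes I_r$ is that of $G_{ij}$ with each multiplicity multiplied by $r$; hence the ``same set of eigenvalues'' condition holds for the tensored family exactly when it holds for the original, giving (3). And $G_{ij}\otimes I_r=\alpha I_{rm}=(\alpha I_m)\otimes I_r$ for all $i\neq j$ if and only if $G_{ij}=\alpha I_m$ for all $i\neq j$ (compare any diagonal $r\times r$ block), giving (4) with the same isoclinic constant $\alpha$.

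For (5), from $\overline{P}_i\overline{P}_j=(P_iP_j)\otimes I_r$ we get $\tr(\overline{P}_i\overline{P}_j)=r\,\tr(P_iP_j)$, so $d_c^2(\overline{\cW}_i,\overline{\cW}_j)=rm-r\,\tr(P_iP_j)=r\,d_c^2(\cW_i,\cW_j)$, while the right-hand side of~\eqref{eqn:orthoplex} for parameters $(rk,rm)$ equals $\frac{rm(rk-rm)}{rk}=r\cdot\frac{m(k-m)}{k}$, exactly $r$ times that of $Gr(\bF,k,m)$. Thus the minimal squared distance and the bound scale by the same factor $r$, so one family saturates the orthoplex bound precisely when the other does. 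That the two families cannot both be \emph{orthoplectic Grassmannian packings} (for the nondegenerate case $r\geq 2$) is a dimension count: an orthoplectic packing of $Gr(\bF,k,m)$ requires $n\leq 2(\cZ(\bF,k)-1)$, whereas $\cZ(\bF,rk)$ grows quadratically in $r$, so $n\leq 2(\cZ(\bF,k)-1)<\cZ(\bF,rk)$ and the tensored family falls below its own orthoplex window; the argument is symmetric in the two families.

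Finally, for (6), summing the identity $\overline{P}_i=P_i\otimes I_r$ established above gives $\sum_{i=1}^n\overline{P}_i=\big(\sum_{i=1}^nP_i\big)\otimes I_r$. Recalling that $\{\cW_i\}_{i=1}^n$ is a tight fusion frame with bound $A$ precisely when its fusion frame operator satisfies $\sum_{i=1}^nP_i=AI_k$, we obtain $\sum_{i=1}^n\overline{P}_i=(AI_k)\otimes I_r=AI_{rk}$, so the tensored family is a tight fusion frame with the same bound $A$; conversely, $\sum_{i=1}^n\overline{P}_i=\overline{A}I_{rk}=(\overline{A}I_k)\otimes I_r$ forces $\big(\sum_{i=1}^nP_i-\overline{A}I_k\big)\otimes I_r=0$ and hence $\sum_{i=1}^nP_i=\overline{A}I_k$. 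I do not expect a genuine obstacle anywhere: the argument runs entirely on the multiplicativity of $\otimes$ under matrix multiplication and adjoints, with the unitaries $U_i$ entering only through $U_i^\ast U_i=U_iU_i^\ast=I_r$; the only spot calling for care is the parameter bookkeeping in (5), namely matching the orthoplex windows of $Gr(\bF,k,m)$ and $Gr(\bF,rk,rm)$ and isolating the role of the hypothesis $r\geq 2$.
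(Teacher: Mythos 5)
Your proposal is correct and follows essentially the same route as the paper: the identity $\overline{L}_i^\ast\overline{L}_{\tilde i}\overline{L}_{\tilde i}^\ast\overline{L}_i=(L_i^\ast L_{\tilde i}L_{\tilde i}^\ast L_i)\otimes I_r$ drives statements (1)--(4), the factor-of-$r$ scaling of both the squared chordal distances and the orthoplex bound together with the disjointness of the windows $(\cZ(\bF,k),2(\cZ(\bF,k)-1)]$ and $(\cZ(\bF,rk),2(\cZ(\bF,rk)-1)]$ gives (5), and $\sum_i\overline{P}_i=\bigl(\sum_iP_i\bigr)\otimes I_r$ gives (6). The only cosmetic difference is that you package the computation as $\overline{L}_i=L_i\otimes U_i$ up front rather than verifying it block by block.
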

\begin{proof}
We begin by computing blocks of $\overline{L}_i^\ast \overline{L}_{\tilde{i}}$ for $i, \tilde{i} \in [n]$, making use of basic properties of the Kronecker product.  We note that for $j, \tilde{j} \in [m]$,
\begin{align}
(\overline{e}_j^i)^\ast \overline{e}_{\tilde{j}}^{\tilde{i}} &= (e_j^i \otimes U_i)^\ast (e_{\tilde{j}}^{\tilde{i}} \otimes U_{\tilde{i}}) = \left((e_j^i)^\ast (e_{\tilde{j}}^{\tilde{i}})\right) \otimes\left( (U_i)^\ast U_{\tilde{i}} \right)\nonumber\\
&= \ip{e_{\tilde{j}}^{\tilde{i}}}{e_j^i} \otimes\left( (U_i)^\ast U_{\tilde{i}} \right)= \ip{e_{\tilde{j}}^{\tilde{i}}}{e_j^i} \left( (U_i)^\ast U_{\tilde{i}} \right)\label{eqn:crossbloc}
\end{align}
When $i = i'$, Equation~\ref{eqn:crossbloc} simplifies to
\[
(\overline{e}_j^i)^\ast \overline{e}_{\tilde{j}}^{i} =  \ip{e_{\tilde{j}}^{i}}{e_j^i} I_r = \left\{ \begin{array}{ll}I_r; & \tilde{j} = j\\ 0_{r}; & \tilde{j} \neq j \end{array}\right.,
\]
where $0_r$ is the $r \times r$ zero matrix. In this case, the off-diagonal blocks of the Gram matrix $\overline{L}_i^\ast \overline{L}_i$ are the zero matrix and the diagonal blocks are the identity.  Thus the columns of $\overline{L}_i$ are orthonormal, and Statement 1 is proven.\\

To prove Statements 2 -- 5, we are concerned with the properties of $\overline{L}_i^\ast \overline{L}_{\tilde{i}}$, or more specifically $\overline{L}_i^\ast \overline{L}_{\tilde{i}} (\overline{L}_{\tilde{i}})^\ast \overline{L}_i$, when $i \neq \tilde{i}$. As usual, we define $L_i$ to be the matrix with columns $\{e_j^i\}_{j=1}^m$ (from the original packing).  Then the $(j, \tilde{j})$ entry of $L_i^\ast L_{\tilde{i}}$ is precisely $\ip{e_{\tilde{j}}^{\tilde{i}}}{e_j^i}$.  Thus, it follows from  Equation~\ref{eqn:crossbloc} that
\begin{align}
\overline{L}_i^\ast \overline{L}_{\tilde{i}}& = (L_i^\ast L_{\tilde{i}})\otimes U_i^\ast U_{\tilde{i}}, \textrm{ and further}\nonumber\\
\overline{L}_i^\ast \overline{L}_{\tilde{i}} (\overline{L}_{\tilde{i}})^\ast \overline{L}_i & = \left((L_i^\ast L_{\tilde{i}})\otimes U_i^\ast U_{\tilde{i}}\right)  \left((L_{\tilde{i}}^\ast L_{i})\otimes U_{\tilde{i}}^\ast U_{i}\right)\nonumber \\
& = \left(L_i^\ast L_{\tilde{i}}L_{\tilde{i}}^\ast L_{i}\right) \otimes \left( U_i^\ast U_{\tilde{i}}U_{\tilde{i}}^\ast U_{i} \right)\label{eqn:crossbloc2b} \\
& = \left(L_i^\ast L_{\tilde{i}}L_{\tilde{i}}^\ast L_{i}\right) \otimes I_r. \label{eqn:crossbloc2}
\end{align}

It follows from Definition~\ref{defn:diffeq}, that $\{\overline{\cW}\}_{i=1}^n$ is equichordal if and only if $\tr \left(\overline{L}_i^\ast \overline{L}_{\tilde{i}} (\overline{L}_{\tilde{i}})^\ast \overline{L}_i\right)$ is constant for all $i \neq \tilde{i}$, which happens if and only if $\tr  \left(L_i^\ast L_{\tilde{i}}L_{\tilde{i}}^\ast L_{i}\right)$ is constant for all $i \neq \tilde{i}$.  Thus, Statement 2 is proven.  Similarly, $\overline{L}_i^\ast \overline{L}_{\tilde{i}} (\overline{L}_{\tilde{i}})^\ast \overline{L}_i$ has spectrum independent of $i \neq \tilde{i}$ (resp. is for all $i \neq \tilde{i}$ equal to a constant multiple times the identity) if and only if $L_i^\ast L_{\tilde{i}}L_{\tilde{i}}^\ast L_{i}$ has spectrum independent of $i \neq \tilde{i}$ (resp. is for all $i \neq \tilde{i}$ equal to a constant multiple times the identity).  Hence, Statements 3 and 4 are proven.\\

If $\{\cW\}_{i=1}^n$ (resp., $\{\overline{\cW}\}_{i=1}^n$) saturates the orthoplex bound, then the maximum value of the chordal distance satisfies for some $i, j \in [n]$
\begin{align*}
\frac{m(k-m)}{k} &= d_c^2(\cW_i,\cW_j) = m - \tr(L_iL_j^\ast L_j L^\ast_i) \\
\bigg(\textrm{resp. } \frac{rm(rk-rm)}{rk} &=  \frac{rm(k-m)}{k}= d_c^2(\overline{\cW}_i,\overline{\cW}_j) = mr - \tr(\overline{L}_i\overline{L}_j^\ast \overline{L}_j \overline{L}^\ast_i) \bigg).
\end{align*}
We can see from Equation~\ref{eqn:crossbloc2} that $\tr(\overline{L}_i\overline{L}_j^\ast \overline{L}_j \overline{L}^\ast_i) = r  \tr(L_iL_j^\ast L_j L^\ast_i)$.  Thus $\{\cW\}_{i=1}^n$ has at least one pair of subspaces at the orthoplex bound if and only if $\{\overline{\cW}\}_{i=1}^n$ does as well.  However, such a configuration is only optimal when $ 2(\cZ(\bF,k)-1) \geq n > \cZ(\bF,k)$  (resp., $2(\cZ(\bF,rk) -1)\geq n > \cZ(\bF,rk)$). Unless $r=1$, $n$ cannot fall in both ranges.

To prove Statement 6, we define 
\begin{align*}
L &= \left( \begin{array}{cccc} L_1 & L_2 & \cdots &L_n \end{array} \right) \in M(\bF,k,mn), \enskip \textrm{and}\\
\overline{L} &= \left( \begin{array}{cccc}\overline{L}_1 & \overline{L}_2 & \cdots & \overline{L}_n \end{array} \right) \in M(\bF,rk,rmn).
\end{align*}
We will make use of the fact that $\{\cW_i\}_{i=1}^n$ (resp., $\{\overline{\cW}_i\}_{i=1}^n$) is a tight fusion frame if and only if $LL^\ast$ (resp., $\overline{L}\overline{L}^\ast$) is a constant multiple of the identity. We calculate the respective matrix products as the sum of the rank one tensors formed from the columns.
\begin{align*}
\overline{L}\overline{L}^\ast &= \sum_{j=1}^m \sum_{i=1}^n \overline{e}_j^i (\overline{e}_j^i)^\ast = \sum_{j=1}^m \sum_{i=1}^n (e_j^i \otimes U_i) (e_j^i \otimes U_i)^\ast \\
&= \sum_{j=1}^m \sum_{i=1}^n \left(\left((e_j^i (e_j^i)^\ast \right)\otimes U_i (U_i)^\ast\right) = \sum_{j=1}^m \sum_{i=1}^n \bigg(\big((e_j^i (e_j^i)^\ast \big)\otimes I_r\bigg) \\
&= \left( \sum_{j=1}^m \sum_{i=1}^n (e_j^i (e_j^i)^\ast \right)\otimes I_r = LL^\ast \otimes I_r.
\end{align*}
It follows that $\overline{L}\overline{L}^\ast = A I_{rk}$ if and only if $LL^\ast = AI_k$.
\end{proof}
We may generalize Theorem~\ref{thm:gentens}.
\begin{cor}\label{cor:gentens} 
Let $\{\cW_i\}_{i=1}^n$ be a collection of $m$-dimensional subspaces in $\bF^k$ and $\{\cV_i\}_{i=1}^n$ be a collection of $r$-dimensional subspaces in $\bF^{\ell}$.  For each $i \in [n]$, fix an orthonormal basis $\{e_j^i\}_{j=1}^m$ of $\cW_i$ and $\{f_t^i\}_{t=1}^r$ of $\cV_i$ .  Further define for each $i \in [n]$ $U_i = (f_1^i \hdots f_r^i)$. Finally for each $i \in [n]$ and $j \in [m]$ define 
\begin{align*}
\overline{e}_j^i &= e_j^i \otimes U_i \in M(\bF,k\ell, r),   \\
\overline{L}_i &= (\overline{e}_1^i \overline{e}_2^i \hdots \overline{e}_m^i) \in M(\bF,k\ell,r m), \textrm{and}\\
\overline{\cW}_i &= \col \overline{L}_i \in Gr(\bF,k\ell,rm).
\end{align*}
Then the following statements hold.
\begin{enumerate}
\item For each $i \in [n]$, the columns of $\overline{L}_i$ are a set of $rm$ orthonormal vectors in $\bF^{rk}$;
\item $\{ \overline{\cW}_i\}_{i=1}^n$ is equichordal (not necessarily a fusion frame) if and only if $\{\cW_i\}_{i=1}^n$ and $\{\cV_i\}_{i=1}^n$  are;
\item $\{ \overline{\cW}_i\}_{i=1}^n$ is strongly simplicial (not necessarily a fusion frame) if and only if $\{\cW_i\}_{i=1}^n$ and $\{\cV_i\}_{i=1}^n$ are; 
\item $\{ \overline{\cW}_i\}_{i=1}^n$ is equiisoclinic (not necessarily a fusion frame) if and only if $\{\cW_i\}_{i=1}^n$ and $\{\cV_i\}_{i=1}^n$ are; 
and
\item[6.] $\{ \overline{\cW}_i\}_{i=1}^n$ is a tight fusion frame if and only if $\{\cW_i\}_{i=1}^n$ and $\{\cV_i\}_{i=1}^n$ are. In this case, the fusion frame bound of $\{ \overline{\cW}_i\}_{i=1}^n$ is the product of the fusion frame bounds of $\{\cW_i\}_{i=1}^n$ and $\{\cV_i\}_{i=1}^n$.
\end{enumerate}
\end{cor}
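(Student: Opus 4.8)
The plan is to run the proof of Theorem~\ref{thm:gentens} essentially verbatim; the one new feature is that each $U_i$ is now an $\ell\times r$ isometry — its columns $\{f_t^i\}_{t=1}^r$ are orthonormal, so $U_i^\ast U_i = I_r$, while $U_i U_i^\ast$ is the orthogonal projection onto $\cV_i$ rather than the identity. First I would record, via the mixed-product rule in Proposition~\ref{prop:tensprop}, the block identity $(\overline{e}_j^i)^\ast \overline{e}_{\tilde{j}}^{\tilde{i}} = \ip{e_{\tilde{j}}^{\tilde{i}}}{e_j^i}\,(U_i^\ast U_{\tilde{i}})$, exactly as in~\eqref{eqn:crossbloc}. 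Specializing to $\tilde{i}=i$ and using $U_i^\ast U_i = I_r$ shows $\overline{L}_i^\ast \overline{L}_i = I_{rm}$, which is Statement~1.

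For Statements 2--4 I would compute, again from Proposition~\ref{prop:tensprop},
\[
\overline{L}_i^\ast \overline{L}_{\tilde{i}} = (L_i^\ast L_{\tilde{i}})\otimes(U_i^\ast U_{\tilde{i}}), \qquad \overline{L}_i^\ast \overline{L}_{\tilde{i}}(\overline{L}_{\tilde{i}})^\ast \overline{L}_i = \left(L_i^\ast L_{\tilde{i}} L_{\tilde{i}}^\ast L_i\right)\otimes\left(U_i^\ast U_{\tilde{i}} U_{\tilde{i}}^\ast U_i\right),
\]
noting that the first tensor factor is the matrix controlling the geometry of $\{\cW_i\}$ in the sense of Definition~\ref{defn:diffeq}, and that, since $U_i$ is to $\{\cV_i\}$ what $L_i$ is to $\{\cW_i\}$, the second factor is the corresponding matrix for $\{\cV_i\}$. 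The forward implications are then immediate from the identities $\tr(A\otimes B) = \tr(A)\tr(B)$, ``the spectrum of $A\otimes B$ is the multiset of pairwise products of eigenvalues of $A$ and $B$'', and $(\alpha I_m)\otimes(\beta I_r) = \alpha\beta I_{rm}$: if both $\{\cW_i\}$ and $\{\cV_i\}$ are equichordal (resp.\ strongly simplicial, resp.\ equiisoclinic), then so is $\{\overline{\cW}_i\}$.

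For Statement~6 I would expand the fusion frame operator of $\{\overline{\cW}_i\}$ as a sum of rank-$r$ tensors. Since $\overline{L}_i^\ast \overline{L}_i = I_{rm}$, the orthogonal projection onto $\overline{\cW}_i$ is $\overline{L}_i\overline{L}_i^\ast = \sum_j (e_j^i (e_j^i)^\ast)\otimes(U_i U_i^\ast) = (L_i L_i^\ast)\otimes(U_i U_i^\ast)$, so $\overline{L}\,\overline{L}^\ast = \sum_{i=1}^n (L_i L_i^\ast)\otimes(U_i U_i^\ast)$. To recover tightness of each factor packing from tightness of $\{\overline{\cW}_i\}$, I would apply the two partial traces $M(\bF,k\ell,k\ell)\to M(\bF,k,k)$ and $M(\bF,k\ell,k\ell)\to M(\bF,\ell,\ell)$ to the identity $\overline{L}\,\overline{L}^\ast = C I_{k\ell}$, using $\tr(U_i U_i^\ast) = r$ and $\tr(L_i L_i^\ast) = m$ to obtain $r\sum_i L_i L_i^\ast = C\ell\, I_k$ and $m\sum_i U_i U_i^\ast = C k\, I_\ell$; comparing traces then pins down $C$ and the two fusion frame bounds.

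The main obstacle I anticipate lies in the reverse implications of Statements 2--4 and 6, i.e.\ deducing a property of each of the two factor packings from the same property of $\{\overline{\cW}_i\}$. Constancy of a product $\tr(A_{i,\tilde{i}})\tr(B_{i,\tilde{i}})$, equality of the multisets of products of two spectra, or $\sum_i(L_i L_i^\ast)\otimes(U_i U_i^\ast)$ being a scalar matrix, do not by themselves force each factor to be individually constant, equi-spectral, or tight. To close these directions I would try to exploit the extra structure at hand — the eigenvalues of each $L_i^\ast L_{\tilde{i}} L_{\tilde{i}}^\ast L_i$ and $U_i^\ast U_{\tilde{i}} U_{\tilde{i}}^\ast U_i$ lie in $[0,1]$, the traces of the relevant projections are fixed by the dimensions, and the partial-trace identities above are available — and I expect essentially all of the work (including obtaining the precise value of the fusion frame bound, and, should the naive disentangling fail, isolating the additional hypothesis on $\{\cW_i\}$ and $\{\cV_i\}$ that is actually required) to be concentrated here.
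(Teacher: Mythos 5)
Your treatment of Statement 1 and of the forward implications of Statements 2--4 is exactly the paper's argument: the paper also reduces to $\overline{L}_i^\ast \overline{L}_{\tilde{i}} (\overline{L}_{\tilde{i}})^\ast \overline{L}_i = \left(L_i^\ast L_{\tilde{i}}L_{\tilde{i}}^\ast L_{i}\right) \otimes \left( U_i^\ast U_{\tilde{i}}U_{\tilde{i}}^\ast U_{i} \right)$ and then invokes the multiplicativity of the trace and of the singular values under $\otimes$. The obstacle you flag for the reverse implications of Statements 2--4 is genuine, and you should be aware that the paper does not close it either: its proof of these statements consists of nothing beyond the two multiplicativity facts, i.e.\ of the forward direction. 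Moreover, no additional argument can close it, because the ``only if'' is false as stated. Take $n=3$ and $m=r=1$: one may prescribe the three pairwise quantities $\tr(L_i^\ast L_{\tilde{i}}L_{\tilde{i}}^\ast L_i)=\absip{e_1^i}{e_1^{\tilde{i}}}^2$ essentially freely for unit vectors in $\bR^3$ (keep them small so the Gram matrices stay positive semidefinite), say $(0.02,0.03,0.06)$ for $\{\cW_i\}$ and $(0.03,0.02,0.01)$ for $\{\cV_i\}$. The three products all equal $0.0006$, so $\{\overline{\cW}_i\}$ is equiangular --- hence equichordal, strongly simplicial, and equiisoclinic, since $rm=1$ --- while neither factor is. So your instinct that ``essentially all the work'' (or a corrected, one-directional statement) lives here is right.

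For Statement 6 the paper simply cites \cite[Theorem 4]{CFMWZ}, so your partial-trace argument is a genuinely different, self-contained route, and it correctly settles the direction you aim it at: applying the two partial traces to $\sum_{i}(L_iL_i^\ast)\otimes(U_iU_i^\ast)=CI_{k\ell}$ does force $\sum_i L_iL_i^\ast=(C\ell/r)I_k$ and $\sum_i U_iU_i^\ast=(Ck/m)I_\ell$, so your closing worry about the reverse implication of Statement 6 is unfounded --- your own computation already disposes of it. The direction you leave unexamined is the one that actually fails: $\sum_i (L_iL_i^\ast)\otimes(U_iU_i^\ast)$ does not factor as $\left(\sum_i L_iL_i^\ast\right)\otimes\left(\sum_i U_iU_i^\ast\right)$. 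For instance, pairing the two standard orthonormal bases of $\bF^2$ diagonally gives $\overline{\cW}_1=\lspan\{e_1\otimes f_1\}$ and $\overline{\cW}_2=\lspan\{e_2\otimes f_2\}$, which do not even span $\bF^4$, although both factors are tight with bound $1$. (The cited result of \cite{CFMWZ} tensors all $n n'$ pairs $\cW_i\otimes\cV_j$ rather than the diagonal, which is also where the ``product of the bounds'' formula belongs: your trace comparison gives $C=nmr/(k\ell)$ for the diagonal construction, whereas the product of the two factor bounds is $n^2mr/(k\ell)$.) In short, the gaps are real but not quite where you place them: the problematic directions are the reverse implications of Statements 2--4 and the \emph{forward} implication of Statement 6, and they reflect overstatements in the corollary itself rather than a missing idea in your argument.
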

\begin{proof}
The proof of the corollary follows the proof of Theorem~\ref{thm:gentens} quite closely.  We note that the $U_i$s are partial isometries, so when $i = i'$,  Equation~\ref{eqn:crossbloc} still simplifies, and the off-diagonal blocks of the Gram matrix $\overline{L}_i^\ast \overline{L}_i$ are the zero matrix and the diagonal blocks are the identity.  Thus the columns of $\overline{L}_i$ are orthonormal, and Statement 1 is proven.

For the proof of Statements 2 -- 4, we can only simplify down to Equation~\ref{eqn:crossbloc2b}.  That is,
\[
\overline{L}_i^\ast \overline{L}_{\tilde{i}} (\overline{L}_{\tilde{i}})^\ast \overline{L}_i 
 = \left(L_i^\ast L_{\tilde{i}}L_{\tilde{i}}^\ast L_{i}\right) \otimes \left( U_i^\ast U_{\tilde{i}}U_{\tilde{i}}^\ast U_{i} \right)
\]
We now make use of the fact that for arbitrary square matrices $A$ and $B$, $\tr(A \otimes B) = \tr(A)\tr(B)$ and for arbitrary matrices the singular values of $A \otimes B$ are simply the products of all of the singular values of $A$ with each of the singular values of $B$.

Statement 5 from Theorem~\ref{thm:gentens} does not carry over well.

Finally, we note that Statement 6 is simply [Theorem 4] from \cite{CFMWZ}.
\end{proof}

Another way to generate a fusion frame from another fusion frame is via subspace complementation \cite{BCPST}.
\begin{lem}[\cite{MBI92,QZL05}]\label{lem:eqpa}
Let $\cW_i$ and $\cW_j$ be subspaces of $\bF^k$.  The nonzero principal angles between $\cW_i$ and $\cW_j$ are equal to the nonzero principal angles between their orthogonal complements $\cW^\perp_i$ and $\cW^\perp_j$.
\end{lem}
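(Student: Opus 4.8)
The plan is to reduce the lemma to the standard fact that, for subspaces $\cW_i,\cW_j\subseteq\bF^k$ with orthonormal-column matrices $L_i,L_j$ spanning them, the cosines of the principal angles between $\cW_i$ and $\cW_j$ — listed with multiplicity (there are $\min(\dim\cW_i,\dim\cW_j)$ of them, with zeros included when $L_i^\ast L_j$ fails to have full rank) — are exactly the singular values of $L_i^\ast L_j$. So it suffices to show that the singular values of $L_i^\ast L_j$ and of $L_i'^\ast L_j'$ that lie in $[0,1)$ form the same multiset, where $L_i',L_j'$ have orthonormal columns spanning $\cW_i^\perp,\cW_j^\perp$; indeed a singular value equal to $1$ is precisely a zero principal angle, which the statement excludes.

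First I would form the $k\times k$ unitaries $G=(L_i\ L_i')$ and $H=(L_j\ L_j')$ and consider the unitary $Q=G^\ast H=\left(\begin{smallmatrix}A&B\\ C&D\end{smallmatrix}\right)$, so that $A=L_i^\ast L_j$ carries the principal angles of $(\cW_i,\cW_j)$ and $D=L_i'^\ast L_j'$ those of $(\cW_i^\perp,\cW_j^\perp)$. Reading $QQ^\ast=Q^\ast Q=I_k$ block by block gives $AA^\ast+BB^\ast=I$, $A^\ast A+C^\ast C=I$, and $CC^\ast+DD^\ast=I$. The heart of the argument is then an eigenvalue chase through these relations: since $AA^\ast$ and $A^\ast A$ share nonzero spectrum with multiplicities (and likewise $CC^\ast$, $C^\ast C$), the maps $\lambda\mapsto 1-\lambda$ link the spectra of $AA^\ast$, $BB^\ast$, $CC^\ast$, $DD^\ast$ \emph{restricted to the open interval $(0,1)$} in a chain, yielding $\operatorname{spec}(AA^\ast)\cap(0,1)=\operatorname{spec}(DD^\ast)\cap(0,1)$. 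This matches all principal angles strictly between $0$ and $\pi/2$, with multiplicity.

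What remains is to match the multiplicity of the angle $\pi/2$, i.e.\ of the singular value $0$. This multiplicity is $\min(\dim\cW_i,\dim\cW_j)-\operatorname{rank}A$; computing $\operatorname{rank}A$ from $\dim\ker(L_i^\ast L_j)=\dim(\cW_j\cap\cW_i^\perp)$ together with the symmetric identity for $A^\ast$ shows it equals $\min\!\big(\dim(\cW_i\cap\cW_j^\perp),\dim(\cW_j\cap\cW_i^\perp)\big)$, and the identical computation for $D$ gives $\min\!\big(\dim(\cW_i^\perp\cap\cW_j),\dim(\cW_j^\perp\cap\cW_i)\big)$ — the same two numbers. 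Combining the two parts proves the claim.

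I expect the passage from $A$ to $D$ to be the only real obstacle: these blocks occupy opposite corners of $Q$ and admit no direct relation, so one is forced to route through the off-diagonal blocks $B$ and $C$ using all four block identities of a unitary matrix, and this transport of eigenvalues is faithful only on the open interval $(0,1)$, which is exactly why the endpoint singular values $1$ (absorbed by the hypothesis) and $0$ (the dimension count) must be handled by hand. An alternative packaging all of this is the cosine--sine decomposition of $Q$, which simultaneously block-diagonalizes $P_{\cW_i}$ and $P_{\cW_j}$ into blocks of size one and two; on each $2\times 2$ block, $\cW_i$ and $\cW_j$ restrict to lines meeting at some angle $\theta\in(0,\pi/2)$ while $\cW_i^\perp$ and $\cW_j^\perp$ restrict to the perpendicular lines, which meet at the same $\theta$, and on both sides the nonzero principal angles are exactly the resulting list of $\theta$'s together with $\pi/2$ repeated $\min(\dim(\cW_i\cap\cW_j^\perp),\dim(\cW_j\cap\cW_i^\perp))$ times.
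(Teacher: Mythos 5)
Your argument is correct. Note first that the paper itself offers no proof of this lemma: it is imported verbatim from \cite{MBI92,QZL05}, so there is no internal proof to compare against, and what you have written is a genuinely self-contained derivation of a fact the author only cites. Your route --- embedding $L_i,L_i'$ and $L_j,L_j'$ into unitaries $G,H$, forming the unitary $Q=G^\ast H$ with corner blocks $A=L_i^\ast L_j$ and $D=L_i'^\ast L_j'$, and chasing eigenvalues through the block identities $AA^\ast+BB^\ast=I$, $A^\ast A+C^\ast C=I$, $CC^\ast+DD^\ast=I$ via $\lambda\mapsto 1-\lambda$ and the equality of the nonzero spectra of $XX^\ast$ and $X^\ast X$ --- is sound, and you correctly identify why the transport is faithful only on $(0,1)$: the endpoint $1$ (zero angles) is excluded by hypothesis, and the endpoint $0$ (right angles) must be counted separately. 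Your dimension count for the multiplicity of the singular value $0$, namely $\min(\dim \cW_i,\dim\cW_j)-\operatorname{rank}A=\min\bigl(\dim(\cW_i\cap\cW_j^\perp),\dim(\cW_j\cap\cW_i^\perp)\bigr)$, checks out and visibly coincides with the corresponding count for $D$, which is exactly what is needed since $\cW_i$ and $\cW_i^\perp$ generally have different dimensions and hence different total numbers of principal angles. The CS-decomposition packaging you sketch at the end is essentially the argument in the cited sources. One cosmetic caveat: the paper's Definition~\ref{defn:diffeq} declares $\cos(\theta_\ell)=\rho_\ell$ for the eigenvalues $\rho_\ell$ of $L_i^\ast L_j L_j^\ast L_i$, whereas your proof uses the standard convention $\cos(\theta_\ell)=\sigma_\ell(L_i^\ast L_j)$, i.e.\ $\cos^2(\theta_\ell)=\rho_\ell$; since the two conventions agree on which angles are zero, nonzero, or right, and your spectral matching is a statement about multisets of eigenvalues, the discrepancy does not affect the validity of the lemma or of your proof.
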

\begin{prop}\label{prop:orth}
Let $\{\cW_i\}_{i=1}^n \in Gr(\bF,k,m)$.  Then the following statements about $\{\cW^\perp_i\}_{i=1}^n \in Gr(\bF,k,k-m)$ hold.
\begin{enumerate}
\item $\{\cW_i\}_{i=1}^n$ is a tight fusion frame with bound $A$ if and only if $\{\cW^\perp_i\}_{i=1}^n$ is a tight fusion frame with bound $n-A$;
\item $\{\cW_i\}_{i=1}^n$ is equichordal (not necessarily a fusion frame) if and only if $\{\cW^\perp_i\}_{i=1}^n$ is;
\item $\{\cW_i\}_{i=1}^n$ is strongly simplicial (not necessarily a fusion frame) if and only if $\{\cW^\perp_i\}_{i=1}^n$ is; 
\item If $\{\cW_i\}_{i=1}^n$ is equiisoclinic (not necessarily a fusion frame), then $\{\cW^\perp_i\}_{i=1}^n$ is equiisoclinic (not necessarily a fusion frame) if and only if $k=2m$ or the subspaces are trivially all the same; and
\item $\{\cW_i\}_{i=1}^n$ is an orthoplectic Grassmannian packing  if and only if $\{\cW^\perp_i\}_{i=1}^n$ is.
\end{enumerate}
\end{prop}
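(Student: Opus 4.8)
The plan is to reduce all five statements to a small number of master identities relating $\cW_i$ to $\cW_i^\perp$, and then to read off each property from the appropriate invariant. Write $P_i$ for the orthogonal projection onto $\cW_i$, so that $I_k-P_i$ is the orthogonal projection onto $\cW_i^\perp$. The first tool is the trace identity
\[
\tr\big((I_k-P_i)(I_k-P_j)\big) = \tr(I_k)-\tr(P_i)-\tr(P_j)+\tr(P_iP_j) = k-2m+\tr(P_iP_j),
\]
obtained using $\tr(P_i)=\tr(P_j)=m$. The second tool is Lemma~\ref{lem:eqpa}, which I reformulate in terms of the matrices $L_i^\ast L_j L_j^\ast L_i$ from Definition~\ref{defn:diffeq}: the eigenvalue $1$ arises exactly from the zero principal angles, of which there are $z_{ij}:=\dim(\cW_i\cap\cW_j)$, while Lemma~\ref{lem:eqpa} says the nonzero principal angles of $\{\cW_i\}$ and $\{\cW_i^\perp\}$ coincide. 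Hence passing from $\{\cW_i\}$ to $\{\cW_i^\perp\}$ leaves the non-unit eigenvalues unchanged and merely replaces the multiplicity $z_{ij}$ of the eigenvalue $1$ by $\dim(\cW_i^\perp\cap\cW_j^\perp)=(k-2m)+z_{ij}$; the two nonzero-angle counts agree ($m-z_{ij}$ on each side), confirming consistency.

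Statements 1, 2, and 5 then follow quickly. For 1, summing gives $\sum_{i=1}^n(I_k-P_i)=nI_k-\sum_{i=1}^n P_i$, so $\sum_i P_i=AI_k$ holds iff $\sum_i(I_k-P_i)=(n-A)I_k$, which is exactly the tight-fusion-frame equivalence with complementary bound, and the converse is the same read backwards. For 2, the trace identity yields $d_c^2(\cW_i^\perp,\cW_j^\perp)=(k-m)-\tr\big((I_k-P_i)(I_k-P_j)\big)=m-\tr(P_iP_j)=d_c^2(\cW_i,\cW_j)$, so the chordal distances agree pairwise and equichordality transfers verbatim. Statement 5 reuses this pairwise equality of $d_c^2$: the orthoplex value $m(k-m)/k$ is invariant under $m\mapsto k-m$, and the admissible range $\cZ(\bF,k)<n\le 2(\cZ(\bF,k)-1)$ depends only on $\bF,k,n$, all shared by the two families; hence one family saturates the bound in the correct regime iff the other does.

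For Statement 3 I invoke the eigenvalue bookkeeping, reading strong simpliciality as pair-independence of the spectrum (eigenvalues with multiplicity), consistent with the use of ``spectrum'' in the proof of Theorem~\ref{thm:gentens}. If $\{\cW_i\}$ is strongly simplicial, then the spectrum of $L_i^\ast L_j L_j^\ast L_i$ is pair-independent, which forces both its non-unit part and the multiplicity $z_{ij}$ of the eigenvalue $1$ to be pair-independent. The complementary matrix has the identical non-unit part together with the eigenvalue $1$ of the pair-independent multiplicity $(k-2m)+z_{ij}$, so its spectrum is again pair-independent; the converse is symmetric since $(\cW_i^\perp)^\perp=\cW_i$.

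The main obstacle is Statement 4, where the property is only conditionally preserved. Being equiisoclinic with constant $\alpha\in(0,1]$ forces every pair to have all $m$ principal angles equal to a single value $\theta$, with $\alpha=1$ corresponding to $\theta=0$ and $\alpha<1$ to a common nonzero $\theta$. If $\alpha=1$ then $\cW_i=\cW_j$ for all pairs, the degenerate ``all the same'' branch, in which the complements trivially coincide and are equiisoclinic. If $\alpha<1$, every angle is nonzero, so $z_{ij}=0$ for all pairs; by the bookkeeping, $\{\cW_i^\perp\}$ then carries its $m$ nonzero angles all equal to $\theta$ together with $(k-2m)$ zero angles (eigenvalue $1$). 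These two eigenvalues coincide iff $k-2m=0$, so for a nontrivial equiisoclinic family the complement is equiisoclinic exactly when $k=2m$, and in that symmetric case $k-m=m$ the argument reverses verbatim. The delicate points are the careful separation of zero from nonzero principal angles in the eigenvalue count and the isolation of the $\alpha=1$ degenerate case; the same computation exhibits why, for genuine equiisoclinic families with $k>2m$ (for instance equiangular lines in $\bR^3$), the complement fails to be equiisoclinic, which is precisely the content that forces the $k=2m$ clause into the statement.
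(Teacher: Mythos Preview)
Your proof is correct and follows essentially the same approach as the paper: both rely on Lemma~\ref{lem:eqpa} for the principal-angle bookkeeping and on the symmetry of the orthoplex value under $m\mapsto k-m$. Your treatment is in fact more detailed---you prove Statement~1 directly via $\sum_i(I_k-P_i)=nI_k-\sum_iP_i$ rather than citing \cite{BCPST}, and you make explicit the multiplicity accounting for the eigenvalue~$1$ in Statement~3 (which the paper glosses with ``follows immediately'')---but the underlying ideas are identical.
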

\begin{proof}
Statement 1 is \cite[Theorem 5]{BCPST}. We note that for tight fusion frames, the fusion frame bound is $\frac{nm}{k}$; hence, $n-A>0$. Statements 2 and 3 follow immediately from Lemma~\ref{lem:eqpa}.  If $\{\cW_i\}_{i=1}^n$ is equiisoclinic, then each pair of subspaces has $m$ equal principal angles.  If the principal angles are all $0$, then the $\cW_i$ are all equal and hence so are the $\cW^\perp_i$.  Otherwise the principal angles are in $(0,\pi/2]$.  By Lemma~\ref{lem:eqpa}, the non-zero principal angles of the $\cW^\perp_i$ are the same, meaning that one must have $\dim \cW^\perp_i = \dim \cW_i$ for equiisoclinicity to be preserved, showing Statement 4.
Finally, for Statement 5, $\{\cW_i\}_{i=1}^n$ is an orthoplectic Grassmannian packing by Theorem~\ref{thm:simplex} if $n > \cZ(\bF,k)$  and there exist $i, j \in [n]$ with $i \neq j$ such that
\[
d_c^2(\cW^\perp_i,\cW^\perp_j)  = d_c^2(\cW_i,\cW_j) = \frac{m(k-m)}{k} = \frac{(k-m)(k-(k-m))}{k}.
\]
Since the number of subspaces and the dimension of the base space $\bF^k$ does not change when taking orthogonal complements, $\{\cW_i\}_{i=1}^n$ being an orthoplectic Grassmannian packing implies that  $\{\cW^\perp_i\}_{i=1}^n$ is as well and vice versa.
\end{proof}
There are two huge differences when comparing the results of subspace complementation with the tensor construction (Theorem~\ref{thm:gentens}).  Initially, in contrast to the tensor construction which always destroys the optimality of an orthoplectic Grassmannian packing, subspace complementation preserves it.  We also note that subspace complementation does not in general preserve equiisoclinicity. We can see that in the following example.
\begin{ex}
Let $\{\cW_i\}_{i=1}^4 \in Gr(\bF,3,1)$ be the subspaces spanned by the columns of the $4 \times 4$ Sylvester-Hadamard matrix with the first row removed:
 \[
 \frac{1}{\sqrt{3}}\left( \begin{array}{cccc} 1 & -1 & 1 & -1 \\1 & 1 & -1 & -1 \\1 & -1 & -1 & 1 \\ \end{array} \right).
\]
This fusion frame of $1$-dimensional subspaces is actually an equiangular tight frame and thus trivially equiisoclinic. Then the columns of the following matrices serve as orthonormal bases for $\{\cW_i^\perp\}_{i=1}^4$ with $a = 1/\sqrt{2}$ and $b=1/\sqrt{6}$
\begin{align*}
L_1 &= \left(\begin{array}{cc} -2b & 0 \\ b & -a \\ b & a \end{array} \right),\enskip L_2= \left(\begin{array}{cc} -2b & 0 \\ -b & -a \\ b & -a \end{array} \right), \\
L_3 &= \left(\begin{array}{cc} -2b & 0 \\ -b & -a \\ -b & a \end{array} \right), \enskip L_4=\left(\begin{array}{cc} -2b & 0 \\ b & -a \\ -b & -a \end{array} \right).
\end{align*}
Although $\{\cW_i^\perp\}_{i=1}^4$ is a strongly simplicial tight fusion frame, it is not equiisoclinic since, for example
\[
L_3^\ast L_2 L_2^\ast L_3 = \left(\begin{array}{cc} \frac{7}{9} & \frac{2}{3\sqrt{3}} \\   \frac{2}{3\sqrt{3}} & \frac{1}{3} \end{array} \right) \neq \alpha I_2.
\]
\end{ex}
On the other hand, Theorem~\ref{prop:orth}.4 is not vacuously true, as seen in the following example.
\begin{ex}
We may use Theorem~\ref{thm:gentens} to generate equiisoclinic systems in any even dimensional space $\bF^{2m}$ consisting of up to $4$ subspaces of dimension $m \in \bN$.  To do this, we start with any equiangular system of $n \geq 2$ vectors in $\bF^2$ and tensor them with $m \times m$ unitaries for any $m \in \bN$.  It follows from Theorem~\ref{thm:gentens}.4 that the result is an equiisoclinic collection of $n$ $m$-dimensional subspaces of $\bF^{2m}$.  Gerzon's bound (Theorem~\ref{thm:simplex}) implies that $n \leq 3$ if $\bF = \bR$ and $n \leq 4$ if $\bF= \bC$; it ends up that equiangular lines exist for each possible $n$.  Any pair of non-orthogonal unit vectors will generate an equiangular system in $\bF^2$ that is not a tight frame, while a pair of orthogonal unit vectors yields an equiangular system that is a tight frame.  The \emph{Mercedes-Benz frame} is an example of an equiangular tight frame of $3$ vectors in $\bR^2 \subset \bC^2$:
\[
L = \left(\begin{array}{ccc}0 & \sqrt{3}/2 & -\sqrt{3}/2 \\ 1 & -1/2 & -1/2 \end{array} \right),
\]
and the \emph{symmetric informationally complete operator-valued measure} in $\bC^2$ is an example of an equiangular tight frame of $4$ vectors in $\bC^2$:
\[
L = \left( \begin{array}{cccc} \frac{\sqrt{3+ \sqrt{3}}}{6} &  e^{\pi i /4} \frac{\sqrt{3- \sqrt{3}}}{6}  &  \frac{\sqrt{3+ \sqrt{3}}}{6} & e^{\pi i /4} \frac{\sqrt{3- \sqrt{3}}}{6} \\
e^{\pi i /4} \frac{\sqrt{3- \sqrt{3}}}{6} & \frac{\sqrt{3+ \sqrt{3}}}{6}  &  -e^{\pi i /4} \frac{\sqrt{3- \sqrt{3}}}{6} & -\frac{\sqrt{3+ \sqrt{3}}}{6}  \end{array} \right).
\]
A particularly nice presentation of an equiisoclinic tight fusion frame of $3$ $4$-dimensional subspaces of $\bC^8$ resulting from (the complement of) a Singer difference set may be found in [Example 3.7] of \cite{FiSh20}.  
\end{ex}

\section*{Acknowledgements}
The author was supported in part by the Explorationsprojekt ``Hilbert Space Frames and Algebraic Geometry'' funded by the Zentrum f\"ur Forschungs\-f\"orderung der Univerist\"at Bremen. The author would like to thank the anonymous referee for their helpful suggestions.


\newcommand{\etalchar}[1]{$^{#1}$}
\providecommand{\bysame}{\leavevmode\hbox to3em{\hrulefill}\thinspace}
\providecommand{\MR}{\relax\ifhmode\unskip\space\fi MR }
\providecommand{\MRhref}[2]{%
  \href{http://www.ams.org/mathscinet-getitem?mr=#1}{#2}
}
\providecommand{\href}[2]{#2}

\end{document}